\documentclass[pdflatex,sn-mathphys-num]{sn-jnl}

\usepackage[english]{babel}
\usepackage[utf8x]{inputenc}
\usepackage[T1]{fontenc}
\usepackage[normalem]{ulem}

\usepackage{amsmath}
\usepackage{amsfonts}
\usepackage{amsthm}
\usepackage{graphicx}
\usepackage{enumerate}

\numberwithin{equation}{section}
\newtheorem{theorem}{Theorem}[section]

\newtheorem{lemma}[theorem]{Lemma}

\newtheorem{claim}[theorem]{Claim}

\theoremstyle{definition}

\theoremstyle{remark}

\newcommand{\R}{\mathbb{R}}

\newcommand{\Z}{\mathbb{Z}}

\newcommand{\supp}{\operatorname{supp}}

\title[From Fractional to Set Tilings for Pairs of Lattices] {From Fractional to Set Tilings for Pairs of Lattices} 

\author*[1]{\fnm{Darrin} \sur{Speegle}}\email{darrin.speegle@slu.edu} \affil*[1]{\orgdiv{Department of Mathematics and Statistics}, \orgname{Saint Louis University}, \orgaddress{\city{St. Louis}, \state{MO}, \country{USA}}} 

\abstract{ We generalize a theorem of Isbell asserting that every countably infinite doubly stochastic matrix has a positive generalized diagonal. As an application, we prove a support-reduction theorem for simultaneous lattice tilings. Namely, if a nonnegative measurable function bounded above by one tiles Euclidean space by translations along two full-rank lattices with integer multiplicities, then its pointwise support contains a possibly nonmeasurable set whose indicator function satisfies the same two tiling identities. The proof reduces the problem on each orbit of the group generated by the two lattices to an infinite matrix rounding theorem with integer row and column margins. This matrix theorem gives a \(0\)-\(1\) matrix with prescribed integer margins and support contained in the support of the original matrix. The result is motivated by simultaneous tiling questions arising in harmonic analysis and wavelet-set constructions. } 

\keywords{lattice tiling, simultaneous tiling, wavelet sets, doubly stochastic matrices, integer margins} 

\pacs[MSC Classification]{52C22; 42C40; 15B51; 05C70}

\begin{document}

\maketitle

\section{Introduction}

Simultaneous tiling via multiple actions on a group, often $\R^n$, is a recurring theme in tiling theory \cite{BowSpe26, DutkayHanJorgensenPicioroaga, EtkindLev2023, JM2, KP22}.
If $E\subset \mathbb R^d$ tiles by such a family of transformations, then its indicator function $\chi_E$ satisfies the corresponding functional tiling identity. A related converse is much less automatic: a general nonnegative function may tile because its values distribute mass among overlapping translates, dilates, or more general images, without any individual point of its support being forced to belong entirely to one tile. The question considered in this paper is when this fractional ambiguity can be removed.

In its simplest form, the problem asks the following. Suppose that $0 \le f \le 1$ is a measurable function which tiles exactly by translations along each of two lattices. Must the support of $f$ contain a set $E$ which also tiles by translations along both lattices? The most natural version of this question would require $E$ to be measurable, but in this paper we allow $E$ to be non-measurable. Equivalently, can one replace a fractional tiling function by a $0/1$-valued tiling function without enlarging the support? This question is especially natural in connection with wavelet-set constructions, where orthonormality conditions can often be expressed through simultaneous tiling identities and where one seeks to pass from a general nonnegative tiling function to the indicator function of a measurable frequency set; see, for example, \cite{BRS26}.

The purpose of this paper is to prove such a support-reduction result in the setting considered here. Our main theorem shows that, under the relevant integer-multiplicity hypotheses, a simultaneous tiling by a nonnegative function bounded by one contains a simultaneous tiling by a set. More formally, if a measurable function $f$ satisfies
\[
0 \leq f \leq 1
\]
and tiles with respect to translation along two lattices with integer multiplicities, then there is a possibly non-measurable set
\[
E \subset \supp(f)
\]
whose indicator function satisfies the same two tiling identities. Thus the existence of such a fractional simultaneous tile already forces the existence of an honest simultaneous tile inside its support. Here and below, $\supp(f)$ denotes the pointwise support of the chosen representative of $f$.

The proof introduces two new ideas that were absent from the related result in \cite{BRS26}. The main technical ingredient is an infinite rounding theorem for matrices with integer margins, generalizing a theorem of Isbell on positive diagonals of infinite doubly stochastic matrices \cite{Isbell1955,Isbell1962}, see Theorem \ref{col_sums}. The second additional complication is in the construction of the set itself. In \cite{BRS26}, after discarding a null set, the argument worked on dyadic-rational orbits
\[
[x]=\{2^j x+q:j\in\mathbb Z,\ q\in\mathbb Q\}
\]
with $x$ irrational. This ensured that the relevant orbit parametrizations were injective, so the fiber-matrix associated with dilations and translations had no self-identifications. In the current paper, we need to account for the fact that two lattices $\Gamma_1$ and $\Gamma_2$ may have non-trivial intersection.

Related recent work on simultaneous tiling by functions includes the following. Kolountzakis and Papageorgiou studied functions with small support that tile simultaneously with several lattices, including finite abelian group models where the problem becomes especially concrete \cite{KP22}. Related questions about supports of doubly stochastic arrays arise naturally in this finite setting \cite{EtkindLevSupport, LoukakiSmallSupport}. The present paper takes a complementary point of view: rather than asking how small the support of a simultaneous tiling function can be, we ask when the support of such a function necessarily contains a simultaneous tiling set.

The paper is organized as follows. Section~\ref{matrix} proves the matrix rounding theorem. In Section~\ref{tiling} we apply this theorem to simultaneous tilings and prove the main support-reduction result.

\section{Generalization of Isbell's Theorem}\label{matrix}

A doubly stochastic matrix $A = (a_{ij})_{i \in I, j \in J}$ is a nonnegative matrix such that the row sums and the column sums are both identically $1$, where we assume that $I$ and $J$ are countable sets. We note that if $A$ is doubly stochastic then $|I| = |J|$. The following theorem is due to Isbell \cite{Isbell1955,Isbell1962}:

\begin{theorem}
Let $A$ be a doubly stochastic matrix. There exists a bijection $\sigma:I \to J$ such that for each $i \in I$,
\[
        a_{i \sigma(i)} > 0.
\]
\end{theorem}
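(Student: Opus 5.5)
The plan is to prove Isbell's theorem by a Hall-type argument on the bipartite support graph $G$, whose vertices are $I \sqcup J$ and which has an edge $i \sim j$ exactly when $a_{ij}>0$. We then upgrade the resulting injections to a bijection via a Cantor--Schröder--Bernstein argument adapted to matchings.

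\emph{Step 1: local finiteness and Hall's condition.} Every row and column sum equals $1$, so each vertex of $G$ has countably many neighbours. This may be infinitely many, so $G$ need not be locally finite. The substitute is a ``finite-mass'' Hall condition. Fix a finite set $F\subset I$ and write $N(F)$ for its neighbourhood. Then
\[
|F| = \sum_{i\in F}\sum_{j} a_{ij} = \sum_{j\in N(F)}\sum_{i\in F} a_{ij} \le \sum_{j\in N(F)} 1 ,
\]
so $|N(F)|\ge |F|$. The symmetric inequality holds for finite $F\subset J$.

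\emph{Step 2: an injection $I\to J$ along edges.} Because degrees can be infinite, Hall's theorem for infinite graphs does not apply verbatim. I would therefore use a strengthened finiteness argument. For finite $F\subset I$ and $\varepsilon>0$, choose a finite $J_F\subset J$ with $\sum_{j\in J_F} a_{ij} > 1-\varepsilon$ for all $i\in F$. Hall's condition still holds for subsets of $F$ inside $J_F$ once $\varepsilon<1/|F|$: indeed $|F'|(1-\varepsilon)<\sum_{j\in N(F')\cap J_F}\sum_{i\in F'} a_{ij}\le |N(F')\cap J_F|$, and the left side exceeds $|F'|-1$. This gives a finite matching of $F$ into $J_F$. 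Those finite restrictions do not directly yield compactness. The cleaner route is an augmenting-path construction. Enumerate $I=\{i_1,i_2,\dots\}$ and maintain a matching $M_n$ covering $i_1,\dots,i_n$, where each $i_k$ is rematched only finitely often. To see that such a schedule exists, argue that for each fixed $i_k$ the alternating paths used can be confined to a finite portion of the graph carrying all but a tiny amount of mass. This ensures the limit matching exists.

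\emph{Step 3: symmetric injection and a bijection.} By symmetry we also obtain an edge-injection $J\to I$. Consider the union of the two matchings. Each component is a path or cycle alternating between the two matchings, and in each component we can select one matching covering every vertex, exactly as in the König proof of Schröder--Bernstein. A one-sided infinite path starting in $I$ uses the $I\to J$ matching, one starting in $J$ uses the other, and two-sided paths and cycles can use either. The result is a perfect matching, that is, a bijection $\sigma$ with $a_{i\sigma(i)}>0$.

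\emph{Main obstacle.} The hard step is Step 2, the stabilization of augmenting paths when vertices have infinite degree. Counterexamples to Hall in infinite graphs, where a single vertex is adjacent to infinitely many others, show that the stochastic mass condition must be used in an essential way. My first attempt would be a weighted alternating-path argument: if $i_k$ kept being rematched forever, the mass of row $i_k$ would be forced onto finitely many columns already saturated by earlier rows, contradicting the column sums being $1$.
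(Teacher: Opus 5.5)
Your Steps 1 and 3 are sound. Step 3 is the Mendelsohn--Dulmage/K\"onig argument, and edge-injections in both directions do yield a perfect matching. The gap is Step 2, which carries the whole difficulty and is only asserted. You claim some augmenting-path schedule rematches each $i_k$ only finitely often, but you give no rule for choosing the paths, and your heuristic points the wrong way. Failure to stabilize does not pile the mass of row $i_k$ onto finitely many columns; it lets $i_k$ run off to infinity. For example, set $a_{i_0j_k}=2^{-k}$, $a_{i_kj_k}=1-2^{-k}$ and $a_{i_kj_0}=2^{-k}$ for $k\ge 1$. This matrix is doubly stochastic. Start with $i_0$ matched to $j_1$. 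At stage $k$ the path $i_k\to j_k\to i_0\to j_{k+1}$ is a legitimate augmenting path, so using it at every stage rematches $i_0$ forever, and the pointwise limit leaves $i_0$ and $j_0$ unmatched. So stabilization depends on how paths are chosen, and you supply no such choice. Your truncation with $\varepsilon<1/|F|$ does not help either: $J_F$ depends on $F$, so there is no single graph with finite degrees to which compactness applies.

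The fix is to make the slack depend on the row and be summable. Enumerate $I=\{i_1,i_2,\dots\}$ and choose finite $J_{i_k}\subseteq\{j:a_{i_kj}>0\}$ with $\sum_{j\in J_{i_k}}a_{i_kj}>1-2^{-k}$. Take a nonempty finite $F\subseteq I$ and set $U=\bigcup_{i\in F}J_i$. Since column sums are at most $1$,
\[
|U|\ \ge\ \sum_{j\in U}\sum_{i\in F}a_{ij}\ \ge\ \sum_{i\in F}\sum_{j\in J_i}a_{ij}\ >\ |F|-1,
\]
so $|U|\ge |F|$. M.~Hall's theorem for families of finite sets then gives an injection $\sigma$ with $\sigma(i)\in J_i$. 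The symmetric argument for columns, together with your Step 3, finishes the proof.

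Repaired this way, your route is a purely combinatorial alternative to the paper's. The paper cites this statement from Isbell and recovers it as the case $r_i=c_j=1$ of Theorem~\ref{col_sums}. There, the extreme points of the weak$^*$-compact convex set of substochastic matrices vanishing where $A$ does are shown to be $0$-$1$ matrices. $A$ is then written as a Choquet barycenter of extreme points, and monotone convergence forces almost every extreme point to have full margins. That averaging is exactly what prevents the escape of mass you identified. It also works unchanged for arbitrary integer margins, where a Hall-plus-Schr\"oder--Bernstein reduction is less direct.
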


In this section, we prove the following generalization.

\begin{theorem}\label{col_sums}
Let $I$ and $J$ be finite or countably infinite, and let
$A=(a_{ij})_{i\in I,j\in J}$ be a matrix such that
$0\leq a_{ij}\leq 1$ for all $i\in I$ and $j\in J$. Define
\[
        r_i=\sum_{j\in J} a_{ij},
        \qquad
        c_j=\sum_{i\in I} a_{ij}.
\]
Assume that
\[
        r_i\in \Z
        \quad\text{for every } i\in I,
        \qquad
        c_j\in\Z
        \quad\text{for every } j\in J.
\]
Then there exists a $0$-$1$ matrix
$B=(b_{ij})_{i\in I,j\in J}$ such that
\[
        a_{ij}=0 \implies b_{ij}=0
\]
for all $i,j$, and such that $B$ has the same row and column sums as $A$:
\[
        \sum_{j\in J} b_{ij}=r_i
        \quad\text{for every } i\in I,
        \qquad
        \sum_{i\in I} b_{ij}=c_j
        \quad\text{for every } j\in J.
\]
\end{theorem}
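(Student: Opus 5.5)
Our plan is to reduce Theorem~\ref{col_sums} to Isbell's theorem by \emph{splitting} rows and columns. Each row $i$ with $r_i\ge 1$ is replaced by $r_i$ copies $(i,1),\dots,(i,r_i)$, and each column $j$ with $c_j\ge1$ by $c_j$ copies $(j,1),\dots,(j,c_j)$. Rows and columns with zero margin carry only zero entries, so they are discarded and assigned $b_{ij}=0$. The index sets $I'=\{(i,k)\}$ and $J'=\{(j,l)\}$ remain countable. We then look for a doubly stochastic matrix $A'$ on $I'\times J'$ with the following three properties. First, $A'_{(i,k),(j,l)}>0$ only if $a_{ij}>0$. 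Second, aggregating recovers $A$: $\sum_{k,l}A'_{(i,k),(j,l)}=a_{ij}$. Third, and crucially, for each fixed $i$ and $j$, every copy $(i,k)$ puts total weight at most... in fact we need any permutation to hit the block $(i,\cdot)\times(j,\cdot)$ at most once. Once $A'$ exists, Isbell's theorem gives a bijection $\sigma:I'\to J'$ supported in $\supp A'$. We set $b_{ij}$ equal to the number of $k$ with $\sigma(i,k)\in\{j\}\times\{1,\dots,c_j\}$. Bijectivity then yields row sums $r_i$ and column sums $c_j$, and support containment is automatic.

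The issue is guaranteeing $b_{ij}\in\{0,1\}$, and this is where $a_{ij}\le1$ enters. We will construct the splitting by a ``water-filling'' (consecutive interval) scheme. Enumerate $J$ as $j_1,j_2,\dots$ and view row $i$ as the interval $[0,r_i)$ partitioned consecutively into pieces of lengths $a_{ij_1},a_{ij_2},\dots$. Copy $(i,k)$ receives the portion lying in $[k-1,k)$. Since $a_{ij}\le1$, the piece for $j$ meets at most two unit intervals, but it could be split between two copies, so a single row-copy restriction is not enough. To handle this, we will split row $i$ so that each copy $(i,k)$ touches column $j$'s mass in at most one copy $(j,l)$. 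We will likewise build the column-side splitting, so that the entries of the block $\{(i,k)\}\times\{(j,l)\}$ form a ``staircase'' pattern. The target condition is this: the bipartite support graph of the block contains no two disjoint edges. If that holds, a bijection uses at most one edge of each block and so $b_{ij}\le1$.

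The main obstacle is achieving this block condition simultaneously for rows and columns. Consecutive splitting on one side alone is not sufficient, and with infinitely many $j$ the ordering cannot simply be ``sorted.'' If the block condition proves awkward, our fallback is a direct compactness argument. For finite truncations, the classical integral flow/transportation theorem applies: the polytope of matrices with prescribed integer margins and bounds $0\le x_{ij}\le \lceil a_{ij}\rceil$ is integral by total unimodularity. This yields 0-1 matrices with exact margins on finite pieces, and we would pass to a limit via K\"onig's lemma or Tychonoff. That route carries its own obstacle: finite truncations of $A$ do not have integer margins, so we would need Isbell-style tail control. Concretely, for a finite row set $F$, the columns needed to fill $F$ are controlled by the finitely many columns carrying most of the mass, and the condition $c_j\in\Z$ lets us round the leftover boundary flow. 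Making this tail rounding rigorous, with a Hall-type condition verified on infinite bipartite graphs where degrees may be infinite, is the step we expect to require the most care. We would first try to verify Hall's condition for the $b$-matching (row $i$ demand $r_i$, column $j$ capacity $c_j$) from the fractional solution $A$, and then invoke the infinite Hall theorem for locally finite demands in both directions as in Isbell's argument.
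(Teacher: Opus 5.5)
There is a genuine gap. Neither of your two routes is completed, and in each one the step you leave open is the whole difficulty.

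\textbf{The splitting route.} Your block condition asks that the support of each finite block $\{(i,k)\}_k\times\{(j,l)\}_l$ contain no two disjoint edges. By K\"onig's theorem this means every block is concentrated on a single row copy or on a single column copy. You would therefore have to choose, for each $a_{ij}>0$, a side on which that entry stays unsplit. The choice must be made so that the unsplit entries of each row $i$ bin-pack into $r_i$ bins of capacity $1$, and those of each column $j$ into $c_j$ such bins. Water-filling does not do this, and you give no argument that such a choice exists. Without it, Isbell's bijection may send two copies of row $i$ to two copies of column $j$, giving $b_{ij}=2$.

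\textbf{The fallback.} Besides the non-integer margins of truncations, which you flag yourself, there is a second problem. The property ``row $i$ has exactly $r_i$ ones'' is not closed under pointwise limits when row $i$ has infinite support. So a Tychonoff or K\"onig limit of finite solutions can lose mass to infinity. Hall's condition cannot substitute either. For infinite bipartite graphs with vertices of infinite degree it is not sufficient: take $x_0$ adjacent to every $y_n$ and $x_n$ adjacent only to $y_n$. Hall's condition holds for finite subsets of $\{x_0,x_1,\dots\}$, yet no matching saturates this set. Even two one-sided saturating $b$-matchings would still have to be merged into a two-sided one. The fractional matrix $A$ must be used more strongly than through a Hall-type inequality.

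\textbf{The paper's route.} The paper avoids equalities under limits altogether. It takes the convex set $S$ of matrices with $0\le x_{ij}\le 1$, $x_{ij}=0$ whenever $a_{ij}=0$, and the inequality margins $\sum_j x_{ij}\le r_i$, $\sum_i x_{ij}\le c_j$. These inequalities are weak$^*$ closed, so $S$ is weak$^*$ compact and metrizable. The paper then shows every extreme point of $S$ is $0$-$1$ by a cycle/path/tree perturbation. The key input is that in a tight line with integer sum, each fractional entry's distance to $\Z$ is at most the sum of the other entries' distances to $\Z$. Finally it takes a Choquet measure $\mu$ on $\operatorname{ext}(S)$ with barycenter $A$. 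Since $A$ attains every margin and every $X\in S$ lies below it, monotone convergence shows $\mu$-almost every extreme point attains every margin.

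\textbf{Rescuing your reduction.} If you want to keep the reduction to Isbell, replace the block condition by a gadget. For each $(i,j)$ with $a_{ij}>0$, add a new column $u_{ij}$ and a new row $v_{ij}$, and assign weights as follows:
\begin{itemize}
\item $a_{ij}/r_i$ between every copy of row $i$ and $u_{ij}$;
\item $1-a_{ij}$ at $(v_{ij},u_{ij})$;
\item $a_{ij}/c_j$ between $v_{ij}$ and every copy of column $j$.
\end{itemize}
This matrix is doubly stochastic on countable index sets. Set $b_{ij}=1$ exactly when Isbell's bijection matches $u_{ij}$ to a copy of row $i$, which happens exactly when $v_{ij}$ is matched to a copy of column $j$. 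This gives a $0$-$1$ matrix with the required support and margins, because each $u_{ij}$ is used only once.
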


\begin{proof}
By padding with zero rows and zero columns if necessary, we may assume for notational simplicity that
$I=J=\mathbb N$.

Let $S$ be the set of all matrices
$X=(x_{ij})_{i,j\in\mathbb N}$ satisfying
\[
        0\leq x_{ij}\leq 1,
        \qquad
        x_{ij}=0 \text{ whenever } a_{ij}=0,
\]
and
\[
        \sum_{j\in\mathbb N} x_{ij}\leq r_i
        \quad\text{for every } i,
        \qquad
        \sum_{i\in\mathbb N} x_{ij}\leq c_j
        \quad\text{for every } j.
\]
We view $S$ as a subset of $\ell^\infty(\mathbb N\times\mathbb N)$ with
the weak$^*$ topology induced by $\ell^1(\mathbb N\times\mathbb N)$.

The set $S$ is convex. It is also weak$^*$ compact. Indeed, it is contained
in the weak$^*$ compact unit ball of $\ell^\infty(\mathbb N\times\mathbb N)$.
The coordinate conditions are weak$^*$ closed, and the row and column
inequalities are weak$^*$ closed because, for nonnegative matrices,
\[
        \sum_{j\in\mathbb N} x_{ij}\leq r_i
\]
is equivalent to
\[
        \sum_{j\in F} x_{ij}\leq r_i
        \quad\text{for every finite } F\subseteq\mathbb N,
\]
and similarly for the column inequalities. Since
$\ell^1(\mathbb N\times\mathbb N)$ is separable, $S$ is compact and
metrizable in the weak$^*$ topology.

\begin{claim}\label{main_claim}
Every extreme point of $S$ is a $0$-$1$ matrix.
\end{claim}

Assume the claim and note that $A\in S$. By Choquet's theorem, there exists a probability measure
$\mu$ on $S$ supported on $\operatorname{ext}(S)$ such that $A$ is the
barycenter of $\mu$. Thus, for every weak$^*$ continuous affine function
$\varphi$ on $S$,
\[
        \varphi(A)=\int_S \varphi(X)\,d\mu(X).
\]

We show that $\mu$-almost every extreme point has the correct row sums.
Fix a row index $i$. For each $n$, the function
\[
        X\mapsto \sum_{j=1}^n x_{ij}
\]
is weak$^*$ continuous. Therefore
\[
        \sum_{j=1}^n a_{ij}
        =
        \int_S \sum_{j=1}^n x_{ij}\,d\mu(X).
\]
Letting $n\to\infty$ and using monotone convergence gives
\[
        r_i
        =
        \sum_{j=1}^\infty a_{ij}
        =
        \int_S \sum_{j=1}^\infty x_{ij}\,d\mu(X).
\]
But every $X\in S$ satisfies
\[
        \sum_{j=1}^\infty x_{ij}\leq r_i.
\]
Hence
\[
        \sum_{j=1}^\infty x_{ij}=r_i
\]
for $\mu$-almost every $X\in S$.

Since there are only countably many rows, it follows that for $\mu$-almost
every $X\in S$,
\[
        \sum_{j=1}^\infty x_{ij}=r_i
        \quad\text{for every } i.
\]
The same argument applied to columns shows that, for $\mu$-almost every
$X\in S$,
\[
        \sum_{i=1}^\infty x_{ij}=c_j
        \quad\text{for every } j.
\]

The proof then follows from the definition of $S$ and  the fact that $\mu$ is supported on $\operatorname{ext}(S)$.
\end{proof}

It remains to prove Claim \ref{main_claim}. Claim \ref{main_claim} is an integer-margin analogue of 
the extreme point theorem for doubly stochastic matrices. The perturbation argument we use is closely related to
 Mauldon's Lemma 1 \cite{Mauldon1969}, which was re-discovered by the author in his attempt to fully understand
Isbell's theorem for infinite doubly stochastic matrices \cite{Isbell1962} and the earlier positive-diagonal result \cite{Isbell1955}. See also \cite{RademacherTorielloVielma}.

\begin{proof}[Proof of Claim \ref{main_claim}]
\begin{lemma}\label{sums}
Let $I$ be finite or countably infinite, let $i_0\in I$, and let
$(\alpha_i)_{i \in I}$ be a family of real numbers strictly between $0$ and $1$.
For each $i$, define
\[
        \alpha_i^\prime = \min(\alpha_i, 1 - \alpha_i).
\]
If
\[
        \sum_{i\in I} \alpha_i \in \Z,
\]
then
\[
        \alpha_{i_0}^\prime \le \sum_{i \not= i_0} \alpha_i^\prime .
\]
\end{lemma}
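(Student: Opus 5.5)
We argue by cases according to whether the sum of the remaining terms is large or small. Set $s=\sum_{i\ne i_0}\alpha_i$ (possibly $+\infty$) and $t=\sum_{i\ne i_0}\alpha_i'$. Since $0\le \alpha_{i_0}'\le 1/2$, the inequality is trivial whenever $t\ge 1/2$. In particular, if infinitely many $\alpha_i$ satisfy $\alpha_i\ge 1/2$, then infinitely many $\alpha_i'$ exceed... not necessarily, so we argue more carefully. Note first that $\sum_{i\in I}\alpha_i\in\Z$ forces the sum to be finite. Hence $s<\infty$, and only finitely many $\alpha_i$ are at least $1/2$.

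The key observation is a pointwise bound: for every $i$, $\alpha_i'\ge \operatorname{dist}(\alpha_i,\Z)$, with equality since $\alpha_i\in(0,1)$. By the triangle inequality for distance to the integers, $\operatorname{dist}(x+y,\Z)\le \operatorname{dist}(x,\Z)+\operatorname{dist}(y,\Z)$. This extends to countable sums: for a convergent series of nonnegative terms, apply the finite inequality to partial sums and use continuity of $\operatorname{dist}(\cdot,\Z)$. Therefore
\[
\operatorname{dist}(s,\Z)\le \sum_{i\ne i_0}\operatorname{dist}(\alpha_i,\Z)=t.
\]

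Since $\alpha_{i_0}+s\in\Z$, we have $s\equiv -\alpha_{i_0}\pmod 1$, so $\operatorname{dist}(s,\Z)=\operatorname{dist}(\alpha_{i_0},\Z)=\alpha_{i_0}'$. Combining the two displays gives $\alpha_{i_0}'\le t$, which is the claim.

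The only delicate step is passing the subadditivity of $\operatorname{dist}(\cdot,\Z)$ to infinite sums. The plan handles it as follows. Fix $\varepsilon>0$ and choose a finite $F\subseteq I\setminus\{i_0\}$ with $s-\sum_{F}\alpha_i<\varepsilon$. Then $\operatorname{dist}(s,\Z)\le \operatorname{dist}(\sum_F\alpha_i,\Z)+\varepsilon$, by the $1$-Lipschitz property of $\operatorname{dist}(\cdot,\Z)$. The finite subadditivity bounds this by $t+\varepsilon$, and letting $\varepsilon\to0$ completes the argument. Everything else is immediate.
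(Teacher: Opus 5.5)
Your proof is correct and follows the paper's argument: identify $\alpha_i'$ with the distance to $\Z$, use integrality of the total to get $\|\alpha_{i_0}\|_{\Z}=\|\sum_{i\neq i_0}\alpha_i\|_{\Z}$, and apply subadditivity, extended to countable sums by finite approximation. The paper does the approximation along an exhausting increasing sequence of finite sets, while you use an explicit $\varepsilon$ and the $1$-Lipschitz property; the abandoned false start in your first paragraph (the remark about infinitely many $\alpha_i'$) plays no role and should be deleted.
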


\begin{proof}
For $x\in\mathbb R$, write
\[
        \|x\|_{\mathbb Z}=\inf_{n\in\mathbb Z}|x-n|
\]
for the distance from $x$ to the nearest integer. Since $0<\alpha_i<1$, we have
\[
        \|\alpha_i\|_{\mathbb Z}=\min(\alpha_i,1-\alpha_i)=\alpha_i'.
\]
The function $\|\cdot\|_{\mathbb Z}$ satisfies the triangle inequality
\[
        \|x+y\|_{\mathbb Z}\le \|x\|_{\mathbb Z}+\|y\|_{\mathbb Z}.
\]

Let
\[
        m=\sum_{i\in I}\alpha_i\in\mathbb Z.
\]
Then
\[
        \alpha_{i_0}=m-\sum_{i\neq i_0}\alpha_i.
\]
Therefore, using invariance under integer translations and sign changes,
\[
        \alpha_{i_0}'
        =\|\alpha_{i_0}\|_{\mathbb Z}
        =
        \left\|\sum_{i\neq i_0}\alpha_i\right\|_{\mathbb Z}.
\]

If $I$ is finite, the triangle inequality gives
\[
        \left\|\sum_{i\neq i_0}\alpha_i\right\|_{\mathbb Z}
        \le \sum_{i\neq i_0}\|\alpha_i\|_{\mathbb Z}
        =
        \sum_{i\neq i_0}\alpha_i',
\]
as desired.

If $I$ is infinite, choose an increasing sequence of finite sets
$F_n\subset I\setminus\{i_0\}$ with
\[
        \bigcup_{n=1}^\infty F_n=I\setminus\{i_0\}.
\]
For each $n$, the triangle inequality gives
\[
        \left\|\sum_{i\in F_n}\alpha_i\right\|_{\mathbb Z}
        \le \sum_{i\in F_n}\alpha_i'.
\]
Since $\sum_i\alpha_i<\infty$ and $\alpha_i'\leq \alpha_i$, we may pass to the
limit as $n\to\infty$ and obtain
\[
        \left\|\sum_{i\neq i_0}\alpha_i\right\|_{\mathbb Z}
        \le \sum_{i\neq i_0}\alpha_i'.
\]
Hence
\[
        \alpha_{i_0}'\le \sum_{i\neq i_0}\alpha_i',
\]
which proves the lemma.
\end{proof}

Continuing the proof of Claim \ref{main_claim}, let $D=(d_{ij})\in S$, and suppose that $D$ is not a $0$-$1$ matrix. Write
\[
        \Omega=\{(i,j):0<d_{ij}<1\},
        \qquad
        d'_{ij}=\min\{d_{ij},1-d_{ij}\}.
\]
We must show that $D$ is not an extreme point of $S$.

Consider the bipartite graph whose vertices are the rows and
columns and whose edges are the elements of $\Omega$. Thus the edge
$(i,j)$ joins row $i$ to column $j$. Let $\Gamma$ be a connected component of this graph, and fix an edge of $\Gamma$.

For a row or column $L$, let $M_L$ denote the corresponding upper bound $r_i$ or $c_j$, and write
\[
        s_L(D)=\sum_{(i,j)\in L} d_{ij}.
\]
We say that $L$ is tight if $s_L(D)=M_L$, and loose otherwise. If $L$ is tight, then $M_L\in\Z$, and the entries of $D$ outside $\Omega$ are $0$
or $1$. Hence the sum of the entries of $D$ over $L\cap\Omega$ is an
integer. Therefore Lemma~\ref{sums} implies that, for every
$(i,j)\in L\cap\Omega$,
\[
        d'_{ij}
        \leq
        \sum_{\substack{(r,s)\in L\cap\Omega\\ (r,s)\neq(i,j)}} d'_{rs}.
\]
Consequently, if $|\gamma|\leq d'_{ij}$, then we may choose numbers
$c_{rs}$, indexed by
\[
        (r,s)\in L\cap\Omega,\qquad (r,s)\neq(i,j),
\]
such that
\begin{equation}\label{eq1}
            \sum_{\substack{(r,s)\in L\cap\Omega\\ (r,s)\neq(i,j)}} c_{rs}
        =-\gamma,
        \qquad
        |c_{rs}|\leq d'_{rs}.
\end{equation}

Suppose first that $\Gamma$ contains a cycle. Choose a simple cycle and
write its edges as
\[
        (m_0,n_0),(m_1,n_1),\ldots,(m_{2N-1},n_{2N-1}),
\]
where consecutive entries, with indices taken modulo $2N$, lie alternately
in common rows and common columns. Choose
\[
        0<\beta\leq \min_{0\leq k<2N} d'_{m_k n_k}.
\]
Define $B=(b_{ij})$ by setting $b_{ij}=0$ off the cycle and
\[
        b_{m_k n_k}=(-1)^k\beta,
        \qquad k=0,1,\ldots,2N-1.
\]
Every row and every column that meets the cycle meets it in exactly two
entries, with opposite signs. Hence all row and column sums of $B$ are
zero. Also
\[
        |b_{ij}|\leq d'_{ij}
\]
for all $i,j$. Therefore
\[
        D+B\in S,
        \qquad
        D-B\in S,
\]
and $B\neq 0$. Thus $D$ is not extreme.

It remains to consider the case where $\Gamma$ is a tree. If $\Gamma$
contains two distinct loose vertices, choose the unique finite path between
them. Let its edges be
\[
        e_0,e_1,\ldots,e_N.
\]
Choose $\beta>0$ small enough that
\[
        \beta\leq d'_e
        \quad\text{for every edge } e \text{ on the path},
\]
and also small enough that the two loose endpoint constraints remain
satisfied after perturbing by $\pm\beta$. Define $B$ to be zero off the
path and to take the alternating values
\[
        \beta,-\beta,\beta,-\beta,\ldots
\]
on the path. Every internal row or column on the path has $B$-sum zero.
Only the two loose endpoint constraints may change, and $\beta$ was chosen
small enough to preserve them. Hence again
\[
        D+B\in S,
        \qquad
        D-B\in S,
\]
with $B\neq 0$.

Thus we may assume that $\Gamma$ is a tree with at most one loose vertex.
There are two cases.

First suppose that $\Gamma$ has no loose vertices. Choose an edge
$e_0=(i_0,j_0)$ of $\Gamma$ and choose
\[
        0<\beta\leq d'_{i_0j_0}.
\]
Set
\[
        b_{i_0j_0}=\beta.
\]
We now define the remaining entries of $B$ by moving outward in the tree
from the edge $e_0$. Whenever a tight row or column $L$ has a single
already assigned incident edge $e$ with value $\gamma$, choose values on
all the other edges of $L\cap\Omega$ so that their sum is $-\gamma$ and
their absolute values are bounded by the corresponding $d'_{rs}$ as in \eqref{eq1}. Continuing by increasing graph distance
from $e_0$, every row and every column in $\Gamma$ is balanced. Set
$b_{ij}=0$ off $\Gamma$. Then all row and column sums of $B$ are zero, and
\[
        |b_{ij}|\leq d'_{ij}
\]
for all $i,j$.

Finally suppose that $\Gamma$ has exactly one loose vertex, say $L_0$.
Choose an edge $e_0\in\Gamma$ incident to $L_0$, and choose $\beta>0$ such
that
\[
        \beta\leq d'_{e_0}
\]
and such that the loose constraint $L_0$ remains satisfied after changing
its sum by $\pm\beta$. Set $b_{e_0}=\beta$, set all other edges of
$\Gamma$ incident to $L_0$ equal to zero, and then move outward from
$L_0$ in the tree. At each tight row or column, balance the already
assigned parent edge by assigning values to the remaining incident edges,
using the observation above. Set $b_{ij}=0$ off $\Gamma$.

In this construction every tight row and column has $B$-sum zero. The only
possibly nonzero row or column sum of $B$ occurs at the loose vertex $L_0$,
and $\beta$ was chosen small enough so that both perturbations remain in
$S$. Also
\[
        |b_{ij}|\leq d'_{ij}
\]
for all $i,j$, and $B\neq 0$.

In every case we have found a nonzero matrix $B$ such that
\[
        D+B\in S,
        \qquad
        D-B\in S.
\]
Therefore
\[
        D=\frac12(D+B)+\frac12(D-B)
\]
is a nontrivial convex decomposition in $S$. Hence $D$ is not an extreme
point. This proves that every extreme point of $S$ is a $0$-$1$ matrix.
\end{proof}

\section{Application to Tiling}\label{tiling}

In \cite{BRS26}, it was shown that if a function tiles $\R$ simultaneously by dilations and translations, then its support contains a possibly nonmeasurable set which also tiles $\R$ via both translations and dilations. In this section, we use Theorem \ref{col_sums} to prove a similar theorem about the support of a function which simultaneously {\bf {multi-tiles}} $\R^n$ via two lattices.

\begin{theorem}\label{multitile}
Let $\Gamma_1$ and $\Gamma_2$ be full-rank lattices in $\mathbb R^n$, and
let $A,B\in\mathbb Z_{\geq 0}$. Let $0 \leq f \leq 1$ be a nonnegative,
measurable function satisfying
\[
        \sum_{\gamma\in\Gamma_1} f(x+\gamma)=A
        \qquad\text{and}\qquad
        \sum_{\gamma\in\Gamma_2} f(x+\gamma)=B
\]
almost everywhere. Then there exists a possibly non-measurable set
$E\subseteq\{x:f(x)>0\}$ such that
\[
        \sum_{\gamma\in\Gamma_1} I_E(x+\gamma)=A
        \qquad\text{and}\qquad
        \sum_{\gamma\in\Gamma_2} I_E(x+\gamma)=B
\]
almost everywhere.
\end{theorem}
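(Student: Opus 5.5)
The plan is to reduce to Theorem~\ref{col_sums} orbit by orbit. First discard a null set. Let $N_0$ be the set of $x$ where either tiling identity fails. Put $N=N_0+(\Gamma_1+\Gamma_2)$. Since $\Gamma_1+\Gamma_2$ is countable, $N$ is null and invariant under $\Gamma_1+\Gamma_2$. On $\R^n\setminus N$ both identities hold at \emph{every} point, and we simply set $E\cap N=\emptyset$.

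Let $G=\Gamma_1+\Gamma_2$, and split $\R^n\setminus N$ into $G$-orbits $x+G$. Using the axiom of choice, pick one representative $x$ per orbit; this is the source of non-measurability. Fix an orbit $O=x+G$. Its $\Gamma_1$-cosets $I=O/\Gamma_1$ are countable and play the role of rows, and its $\Gamma_2$-cosets $J=O/\Gamma_2$ are the columns.

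The key observation concerns the intersections. For a row $R=y+\Gamma_1$ and a column $C=z+\Gamma_2$ in $O$, the set $R\cap C$ is either empty or a coset of $\Lambda=\Gamma_1\cap\Gamma_2$. It is nonempty exactly when $y-z\in \Gamma_1+\Gamma_2$, which always holds inside one orbit. So the cells are the $\Lambda$-cosets $w+\Lambda\subset O$, and there is a bijection $I\times J\to O/\Lambda$. This is precisely where the nontrivial intersection of the lattices enters. The cells may be infinite, and then the matrix entry must be split.

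To handle this, I will index the matrix by points rather than cells. I take rows $I=O/\Gamma_1$ and columns indexed by pairs $(C,p)$ with $C\in O/\Gamma_2$ and $p$ a point. That breaks the symmetry, so instead I use the cleanest indexing: rows are the $\Gamma_1$-cosets, columns are the $\Gamma_2$-cosets, and the ``entries'' are the points of $O$. This makes it a bipartite multigraph with countably many parallel edges between each row-column pair, with weight $f(w)$ on edge $w$.

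Theorem~\ref{col_sums} as stated needs one entry per pair, but its proof (the compact convex set $S$, Choquet's theorem, and the extreme-point Claim~\ref{main_claim} via cycles and trees in the bipartite graph) goes through verbatim for multigraphs. Alternatively, I can reduce to the matrix case formally. Subdivide each point $w$ into an auxiliary bipartite structure: take the matrix with rows $O/\Gamma_1\sqcup O$ and columns $O/\Gamma_2\sqcup O$, and set entries $a_{R,w}=f(w)$ for $w\in R$, $a_{w,C}=f(w)$ for $w\in C$, and $a_{w,w}=1-f(w)$. Every entry lies in $[0,1]$. The row sums are $A$ at $R$ and $1$ at $w$, and the column sums are $B$ at $C$ and $1$ at $w$, so all margins are integers.

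Theorem~\ref{col_sums} then yields a $0$-$1$ matrix $b$ with the same margins and support inside the support of $a$. Row $w$ and column $w$ each have sum $1$ and share the entry $b_{w,w}$. Hence $b_{R(w),w}=1-b_{w,w}=b_{w,C(w)}$; call this common value $\epsilon_w\in\{0,1\}$. If $\epsilon_w=1$ then $a_{R(w),w}=f(w)>0$. Set $E\cap O=\{w:\epsilon_w=1\}$. The margins at $R$ and $C$ give $\sum_{w\in R}\epsilon_w=A$ and $\sum_{w\in C}\epsilon_w=B$. These are exactly the two tiling identities at every point of $O$, since $\sum_{\gamma\in\Gamma_1}I_E(y+\gamma)$ counts the points of $E$ in the coset $y+\Gamma_1$.

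Taking the union over orbits gives $E\subseteq\{f>0\}$, with both identities holding off the null set $N$. I expect the main obstacle to be this bookkeeping: correctly handling the intersection $\Gamma_1\cap\Gamma_2$, which the auxiliary-matrix trick resolves, and checking that the countability of all index sets and the integrality of the margins hold on every orbit. Once those are verified, Theorem~\ref{col_sums} does the rest.
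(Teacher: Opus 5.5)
Your proof is correct, but it handles the cells $R\cap C$ differently from the paper.

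\textbf{The paper's route.} It keeps the matrix indexed by $\mathcal R_t\times\mathcal C_t$, with aggregated entries $a_{RC}=\sum_{x\in R\cap C}f(x)$. Since a cell is a coset of $\Gamma_1\cap\Gamma_2$, these entries can exceed $1$. So the paper splits off $\lfloor a_{RC}\rfloor$ and applies Theorem~\ref{col_sums} only to the fractional parts $\theta_{RC}$. It then sets $n_{RC}=\lfloor a_{RC}\rfloor+\epsilon_{RC}$. A separate counting argument, using $f\le 1$ and $\theta_{RC}=0\Rightarrow\epsilon_{RC}=0$, shows that $R\cap C\cap\{f>0\}$ has at least $n_{RC}$ points. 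Finally it makes a second, arbitrary selection of points inside each cell.

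\textbf{Your route.} You use the standard edge-subdivision trick. Each point $w$ becomes an auxiliary row and column joined by the entry $1-f(w)$. So every entry lies in $[0,1]$ automatically, which is exactly where you use $f\le 1$. All margins are integers ($A$, $B$, $1$, $1$), and $E\cap O$ is read off directly from $b_{w,w}$. The forced identities $b_{R(w),w}=1-b_{w,w}=b_{w,C(w)}$, together with the support condition, give both tiling counts and $E\subseteq\{f>0\}$.

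\textbf{Comparison.} Your route avoids the floor/fractional-part bookkeeping, the cell-size argument, and the second selection. It does not even need the fact that every cell is a nonempty $\Gamma_1\cap\Gamma_2$-coset; you observe this but never use it. It also gives a small bonus for free: $\{f=1\}\setminus N\subseteq E$, since $a_{w,w}=0$ forces $\epsilon_w=1$. The paper's version works with a smaller matrix indexed only by cosets. It also makes visible that the integer parts of the cell masses are forced and only the fractional parts need rounding.

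\textbf{Presentation.} In a write-up, drop the abandoned indexing attempt. Also drop the unproved remark that the proof of Theorem~\ref{col_sums} carries over verbatim to multigraphs. Your auxiliary-matrix reduction makes both unnecessary.
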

\begin{proof}
Let $N_1$ and $N_2$ be null sets outside of which the two identities for $f$
hold. Put
\[
        G=\Gamma_1+\Gamma_2
\]
and define
\[
        N=\bigcup_{g\in G}((N_1\cup N_2)-g).
\]
Since $G$ is countable, $N$ is still a null set. Moreover, $N$ is
$G$-invariant. Hence
\[
        X=\R^n\setminus N
\]
is also $G$-invariant, and for every $x\in X$ we have
\[
        \sum_{\gamma\in\Gamma_1} f(x+\gamma)=A
        \qquad\text{and}\qquad
        \sum_{\gamma\in\Gamma_2} f(x+\gamma)=B.
\]
It is enough to construct $E\subset X\cap\{f>0\}$ so that the desired
indicator identities hold for every $x\in X$.

Choose a set $T$ of representatives for the $G$-orbits in $X$. Fix $t\in T$
and write
\[
        O=t+G.
\]
Let $\mathcal R_t$ denote the collection of $\Gamma_1$-cosets contained in
$O$, and let $\mathcal C_t$ denote the collection of $\Gamma_2$-cosets
contained in $O$. Thus each $R\in\mathcal R_t$ has the form $y+\Gamma_1$,
and each $C\in\mathcal C_t$ has the form $z+\Gamma_2$, with $y,z\in O$.
Since $G$ is countable, $O$ is countable, and therefore both
$\mathcal R_t$ and $\mathcal C_t$ are finite or countably infinite.

For $R\in\mathcal R_t$ and $C\in\mathcal C_t$, the intersection $R\cap C$
is nonempty. Indeed, if
\[
        R=y+\Gamma_1,
        \qquad
        C=z+\Gamma_2,
\]
with $y,z\in O$, then $y-z\in G=\Gamma_1+\Gamma_2$. Hence we may write
\[
        y-z=\gamma_1+\gamma_2
\]
with $\gamma_1\in\Gamma_1$ and $\gamma_2\in\Gamma_2$, and then
\[
        y-\gamma_1=z+\gamma_2\in R\cap C.
\]
Moreover, since $R\cap C$ is non-empty, it follows from standard facts that $R \cap C$ is a single coset of $H = \Gamma_1 \cap \Gamma_2$.

Define
\[
        a_{RC}=\sum_{x\in R\cap C} f(x).
\]
The sums are countable sums of nonnegative numbers. Since $R$ is
partitioned by the sets $R\cap C$, with $C\in\mathcal C_t$, we have
\[
        \sum_{C\in\mathcal C_t} a_{RC}
        =
        \sum_{x\in R} f(x)
        =
        A
        \quad\text{for every } R\in\mathcal R_t.
\]
Similarly,
\[
        \sum_{R\in\mathcal R_t} a_{RC}
        =
        \sum_{x\in C} f(x)
        =
        B
        \quad\text{for every } C\in\mathcal C_t.
\]
In particular, every $a_{RC}$ is finite.

The entries $a_{RC}$ need not lie between $0$ and $1$, because an
$H$-coset may contain more than one point. We therefore separate integer
and fractional parts. Set
\[
        \theta_{RC}=a_{RC}-\lfloor a_{RC}\rfloor.
\]
Then
\[
        0\leq \theta_{RC}<1.
\]
Moreover, for every $R\in\mathcal R_t$,
\[
        \sum_{C\in\mathcal C_t} \lfloor a_{RC}\rfloor
        \leq
        \sum_{C\in\mathcal C_t} a_{RC}
        =
        A.
\]
Thus the sum of the integer parts is finite, and hence
\[
        \sum_{C\in\mathcal C_t}\theta_{RC}
        =
        A-\sum_{C\in\mathcal C_t}\lfloor a_{RC}\rfloor
        \in\Z_{\geq 0}.
\]
Similarly, for every $C\in\mathcal C_t$,
\[
        \sum_{R\in\mathcal R_t}\theta_{RC}
        =
        B-\sum_{R\in\mathcal R_t}\lfloor a_{RC}\rfloor
        \in\Z_{\geq 0}.
\]

By Theorem~\ref{col_sums}, applied to the matrix
\[
        \theta=(\theta_{RC})_{R\in\mathcal R_t,\ C\in\mathcal C_t},
\]
there exists a $0$-$1$ matrix
\[
        \epsilon=(\epsilon_{RC})_{R\in\mathcal R_t,\ C\in\mathcal C_t}
\]
such that
\[
        \sum_{C\in\mathcal C_t}\epsilon_{RC}
        =
        \sum_{C\in\mathcal C_t}\theta_{RC}
        \quad\text{for every } R\in\mathcal R_t,
\]
and
\[
        \sum_{R\in\mathcal R_t}\epsilon_{RC}
        =
        \sum_{R\in\mathcal R_t}\theta_{RC}
        \quad\text{for every } C\in\mathcal C_t.
\]
We may also require
\[
        \theta_{RC}=0 \implies \epsilon_{RC}=0.
\]

Now define
\[
        n_{RC}=\lfloor a_{RC}\rfloor+\epsilon_{RC}.
\]
Then
\[
        \sum_{C\in\mathcal C_t} n_{RC}
        =
        A
        \quad\text{for every } R\in\mathcal R_t,
\]
and
\[
        \sum_{R\in\mathcal R_t} n_{RC}
        =
        B
        \quad\text{for every } C\in\mathcal C_t.
\]

It remains to choose $n_{RC}$ points from each set $R\cap C$. Let
\[
        S_{RC}=(R\cap C)\cap\{x:f(x)>0\}.
\]
We claim that $S_{RC}$ contains at least $n_{RC}$ points. Indeed,
\[
        a_{RC}=\sum_{x\in S_{RC}} f(x).
\]
If $S_{RC}$ is infinite, there is nothing to prove, since $n_{RC}$ is finite.
If $S_{RC}$ has $m$ points, then $0<f(x)\leq 1$ on $S_{RC}$ gives
\[
        a_{RC}\leq m.
\]
If $\theta_{RC}=0$, then $\epsilon_{RC}=0$, and hence
\[
        n_{RC}=a_{RC}\leq m.
\]
If $\theta_{RC}>0$, then
\[
        n_{RC}\in\{\lfloor a_{RC}\rfloor,\lceil a_{RC}\rceil\}.
\]
Since $a_{RC}\leq m$ and $m$ is an integer, it follows that
\[
        n_{RC}\leq \lceil a_{RC}\rceil\leq m.
\]
Thus $S_{RC}$ contains a subset
\[
        E_{RC}\subset S_{RC}
\]
with
\[
        |E_{RC}|=n_{RC}.
\]

Define
\[
        E_t=\bigcup_{R\in\mathcal R_t}\bigcup_{C\in\mathcal C_t}E_{RC}.
\]
The sets $R\cap C$ partition $O$, so for every $R\in\mathcal R_t$,
\[
        |E_t\cap R|
        =
        \sum_{C\in\mathcal C_t} n_{RC}
        =
        A,
\]
and for every $C\in\mathcal C_t$,
\[
        |E_t\cap C|
        =
        \sum_{R\in\mathcal R_t} n_{RC}
        =
        B.
\]

Finally, set
\[
        E=\bigcup_{t\in T}E_t.
\]
Then
\[
        E\subset X\cap\{f>0\}.
\]
If $x\in X$, then $x$ belongs to a unique $G$-orbit $O=t+G$. The coset
$x+\Gamma_1$ is one of the rows in $\mathcal R_t$, and the coset
$x+\Gamma_2$ is one of the columns in $\mathcal C_t$. Therefore
\[
        \sum_{\gamma\in\Gamma_1} I_E(x+\gamma)
        =
        |E\cap (x+\Gamma_1)|
        =
        A
\]
and
\[
        \sum_{\gamma\in\Gamma_2} I_E(x+\gamma)
        =
        |E\cap (x+\Gamma_2)|
        =
        B.
\]
Since $\R^n\setminus X=N$ is null, the two identities hold almost
everywhere.
\end{proof}

We end this section with an example of a function $f$ on $\R^2$ and three lattices such that $f$ tiles $\R^2$ via translations along each of the three lattices, and for which the support of $f$ does not contain a set which tiles by translations along all three lattices simultaneously.

Let
\[
        Q=[0,1)^2,
        \qquad
        S=[0,2)^2,
\]
and define
\[
        f=\frac12 I_S.
\]
Let
\[
        \Gamma_1=\mathbb Z(1,0)+\mathbb Z(0,2),
        \qquad
        \Gamma_2=\mathbb Z(2,0)+\mathbb Z(0,1),
\]
and
\[
        \Gamma_3=\{(m,n)\in\mathbb Z^2:m\equiv n \pmod 2\}.
\]
Then $2\mathbb Z^2$ is a sublattice of each $\Gamma_i$, and
\[
        [\Gamma_i:2\mathbb Z^2]=2
        \qquad\text{for } i=1,2,3.
\]
Since $S=[0,2)^2$ is a fundamental domain for $2\mathbb Z^2$, the function
$I_S$ tiles at level $2$ by translations along each $\Gamma_i$. Hence
\[
        f=\frac12 I_S
\]
tiles at level $1$ by translations along each of the three lattices.

We claim, however, that $S=\{x:f(x)>0\}$ does not contain a set which tiles
at level $1$ by all three lattices simultaneously. Suppose such a set
$E\subset S$ existed. For a.e. $x\in Q$, set
\[
        a=I_E(x),
        \qquad
        b=I_E(x+(1,0)),
        \qquad
        c=I_E(x+(0,1)).
\]
The $\Gamma_1$-tiling identity at $x$ gives
\[
        a+b=1,
\]
the $\Gamma_2$-tiling identity at $x$ gives
\[
        a+c=1,
\]
and the $\Gamma_3$-tiling identity at $x+(1,0)$ gives
\[
        b+c=1.
\]
But there are no numbers $a,b,c\in\{0,1\}$ satisfying these three equations.
This contradiction shows that no such set $E$ exists.

\backmatter

\section*{Statements and Declarations}

\subsection*{Competing interests}
The author declares that he has no competing interests.

\subsection*{Funding}
No funding was received for conducting this study.

\subsection*{Data availability}
Data sharing is not applicable to this article as no datasets were generated or analyzed.

\subsection*{AI Usage}
The author used ChatGPT for editorial assistance, proofreading, and suggestions during the drafting process. The author independently provided and re-checked all mathematical arguments, statements, and references, and takes full responsibility for the final manuscript.

\bibliography{refs}

\end{document}